\newtheorem{Theorem}{Theorem}[section]
\newtheorem{Proposition}[Theorem]{Proposition}
\newtheorem{Text}[Theorem]{}
\newcommand{\bA}{{\mathbb A}}
\newcommand{\bB}{{\mathbb B}}
\newcommand{\bC}{{\mathbb C}}
\newcommand{\bD}{{\mathbb D}}
\newcommand{\bE}{{\mathbb E}}
\newcommand{\bF}{{\mathbb F}}
\newcommand{\bG}{{\mathbb G}}
\newcommand{\bH}{{\mathbb H}}
\newcommand{\bK}{{\mathbb K}}
\newcommand{\bP}{{\mathbb P}}
\newcommand{\bV}{{\mathbb V}}
\newcommand{\bX}{{\mathbb X}}
\newcommand{\bY}{{\mathbb Y}}
\newcommand{\cA}{{\mathcal A}}
\newcommand{\cC}{{\mathcal C}}
\newcommand{\cF}{{\mathcal F}}
\newcommand\Grp{\mathbf{Grp}}
\newcommand\Grpd{\mathbf{Grpd}}
\newcommand\Fract{\mathbf{Fract}}
\newcommand\Funct{\mathbf{Funct}}
\newcommand\id{\mathrm{id}}
\newcommand{\cd}{\xymatrix}
\title{Bipullbacks of fractions and the snail lemma}
\author{P.-A. Jacqmin, S. Mantovani, G. Metere, E.M. Vitale}
\keywords{bicategory of fractions, bipullback, internal groupoid, snail lemma}
\address
{Institut de recherche en math\'ematique et physique, Universit\'e catholique de Louvain\\
Chemin du Cyclotron 2, B 1348 Louvain-la-Neuve, Belgique.\\
Dipartimento di matematica, Universit\`a degli studi di Milano\\
Via C. Saldini 50, 20133 Milano, Italia.\\
Dipartimento di matematica e informatica, Universit\`a degli studi di Palermo\\
Via Archirafi 34, 90123 Palermo, Italia.}
\thanks{Financial support from FNRS grant 1.A741.14 is gratefully acknowledged by the first author. The second and the third author acknowledge the financial support of the I.N.D.A.M.\ Gruppo Nazionale per le Strutture Algebriche, Geometriche e le loro Applicazioni.}
\begin{document}

\maketitle

%\begin{center}
%{\includegraphics[width=6cm]{Lumaca.pdf}}
%\end{center}

\begin{abstract}
We establish conditions giving the existence of bipullbacks in bicategories of fractions.
We apply our results to construct a $\pi_0$-$\pi_1$ exact sequence associated with 
a fractor between groupoids internal to a pointed exact category.
\end{abstract}

\tableofcontents

\section{Introduction}

Let $\cA$ be a pointed exact (in the sense of~\cite{Barr}) category,
and let $F \colon \bA \to \bB$ be a functor between internal groupoids in $\cA$. 
In~\cite{SnailMMVShort}, we have constructed a $\pi_0$-$\pi_1$ exact sequence
$$\pi_1(\bK(F)) \to \pi_1(\bA) \to \pi_1(\bB) \to \pi_0(\bK(F)) \to \pi_0(\bA) \to \pi_0(\bB)$$
where $\pi_1(\bA)$ is the internal group of automorphisms on the base point of $\bA$, $\pi_0(\bA)$ is 
the object of connected components of $\bA$, and $\bK(F)$ is the bikernel of $F$. This sequence 
subsumes several relevant special cases: if $\cA$ is the category of pointed sets, we get the Gabriel-Zisman 
exact sequence of~\cite{GZ} and, if $F$ is a fibration, the Brown exact sequence of~\cite{RB}; if $\cA$ 
is semi-abelian or abelian, we get the exact sequence of the classical snake lemma when $F$ is a fibration, 
and the exact sequence of the more general snail lemma if $F$ is an arbitrary functor (see~\cite{DB, SnailZJEV, SnailEV}). 

A special case of particular interest is when $\cA$ is the category of groups. In this case, the $\pi_0$-$\pi_1$
sequence already appears in~\cite{DKV}; in fact, the sequence in~\cite{DKV} is even more general,
because it is obtained starting from a monoidal functor between groupoids in groups, and not necessarily from an
internal functor. The precise relation between internal functors and monoidal functors between internal 
groupoids in groups has been established in~\cite{FractEV}: monoidal functors are precisely fractions of internal functors 
with respect to weak equivalences (in the sense of~\cite{BP}). When $\cA$ is an arbitrary exact category, the bicategory 
of fractions of $\Grpd(\cA)$ with respect to weak equivalences has been described in~\cite{MMV} using fractors (fractors
are a particular kind of profunctors). The aim of this note is to complete the result of~\cite{SnailMMVShort}, showing
that the $\pi_0$-$\pi_1$ exact sequence can be constructed starting from any fractor $F \colon \bA \looparrowright \bB$
between internal groupoids in $\cA$ pointed exact. 

Since the 2-functors
$$\pi_0 \colon \Grpd(\cA) \to \cA \quad \text{and} \quad \pi_1 \colon \Grpd(\cA) \to \Grp(\cA)$$
send weak equivalences onto isomorphisms (see Lemma~4.5 in~\cite{SnailMMVShort}), they can be extended to the 
bicategory of fractions. Therefore, what remains to be done is to construct bipullbacks (and, in particular, bikernels) 
in the bicategory of fractions using bipullbacks in the 2-category of internal functors.
More generally, we prove that, if $\mathbf B$ is a bicategory with bipullbacks and $\Sigma$ has a right calculus of fractions in the sense of \cite{Pronk},  the bicategory of fractions
$$P_{\Sigma} \colon \mathbf B \to \mathbf B[\Sigma^{-1}]$$
preserves bipullbacks and $\mathbf B[\Sigma^{-1}]$ has them.
Since this is the case  for  the bicategory $\Grpd(\cA)$ with $\Sigma$ the
class of weak equivalences, we can extend the main result of~\cite{SnailMMVShort} getting a $\pi_0$-$\pi_1$
exact sequence from any internal fractor.

\hfill

In this paper, the composition of two arrows
$$\cd{ \ar[r]^-{f} & \ar[r]^-{g} &}$$
will be denoted as $f \cdot g$, or simply by $fg$.

In order to shorten notation, we will use coherence theorems for bicategories as in~\cite{MP}. Therefore, coherence isomorphisms will not be written explicitly.

We will sometimes need to change our base universe to a bigger one in order to define properly some (bi)categories.
However, we will omit to say it when it has to be done.

\section{A reminder on (bi)categories of fractions}

\begin{Text}\label{TextCatFract}{\rm
Categories of fractions have been introduced by P.\ Gabriel and M.\ Zisman in~\cite{GZ} as a useful tool for
algebraic topology and homotopy theory. They encode the universal solution to the problem of converting 
an arrow into an isomorphism. More precisely, if $\cA$ is a category and $\Sigma$ is a class of arrows in $\cA$, 
the category of fractions of $\cA$ with respect to $\Sigma$ is a functor
$$P_{\Sigma} \colon \cA \to \cA[\Sigma^{-1}]$$
universal among all functors $\cF \colon \cA \to \cC$ such that $\cF(s)$ is an isomorphism for all $s \in \Sigma$. 
In other words, for every category $\cC$ the functor induced by composition with $P_{\Sigma}$ is an equivalence 
of categories
$$P_{\Sigma} \cdot - \;\colon \Funct(\cA[\Sigma^{-1}],\cC) \to 
\Funct_{\Sigma}(\cA,\cC)$$
where $\Funct_{\Sigma}(\cA,\cC)$ is the category of those functors $\cF \colon \cA \to \cC$ such that $\cF(s)$ 
is an isomorphism for all $s \in \Sigma$. 

Usually, the description of the category of fractions is quite complicated (see Chapter~5 in~\cite{Bor} for a detailed 
discussion), but if the class $\Sigma$ has a right calculus of fractions, then the description of $\cA[\Sigma^{-1}]$
becomes much more clear. To have a right calculus of fractions means that:
\begin{enumerate}
\item[CF1.] $\Sigma$ contains the identity arrows.
\item[CF2.] $\Sigma$ is closed under composition.
\item[CF3.] Given arrows $f$ and $s$ with same codomain, if $s \in \Sigma$ then there exist arrows $s'$ and $f'$ 
such that $s' \cdot f = f' \cdot s$ and $s' \in \Sigma$.
$$\xymatrix{\ar[r]^-{f'} \ar[d]_{s'} & \ar[d]^{s} \\ \ar[r]_-{f} & }$$
\item[CF4.] Given parallel arrows $f$ and $g$, if there exists an arrow $s \in \Sigma$ such that $f \cdot s = g \cdot s$,
then there exists an arrow $s' \in \Sigma$ such that $s' \cdot f = s' \cdot g$.
$$\xymatrix{\ar[r]^-{s'} & \ar@<0,5ex>[r]^-{f} \ar@<-0,5ex>[r]_-{g} & \ar[r]^-{s} & }$$
\end{enumerate}
}\end{Text}

\begin{Text}\label{TextCatFractDescr}{\rm
If the class $\Sigma$ has a right calculus of fractions, the category $\cA[\Sigma^{-1}]$ can be described as follows:
\begin{enumerate}
\item[-] The objects of $\cA[\Sigma^{-1}]$ are those of $\cA$.
\item[-] An arrow in $\cA[\Sigma^{-1}]$ from $A$ to $B$ is an equivalence class of spans $(s,f)$ with $s \in \Sigma$
$$\xymatrix{A & \ar[l]_s I \ar[r]^f & B}$$  
two spans $(s,f)$ and $(s',f')$ being equivalent if there exist arrows $x,x'$ in $\cA$ such that 
$x \cdot s = x' \cdot s' \in \Sigma$ and $x \cdot f = x' \cdot f'$.
$$\xymatrix{ & I \ar[ld]_-{s} \ar[rd]^-{f} \\ A & X \ar[u]_-{x} \ar[d]^-{x'} & B \\ & I'  \ar[lu]^-{s'} \ar[ru]_-{f'} }$$
\end{enumerate}
Moreover, if $\cA$ has pullbacks and $\Sigma$ has a right calculus of fractions,
then $\cA[\Sigma^{-1}]$ has pullbacks and $P_{\Sigma} \colon \cA \to \cA[\Sigma^{-1}]$ preserves them~\cite{Bor}.
}\end{Text}

\begin{Text}\label{TextBicatFract}{\rm
If $\mathbf B$ is a bicategory and $\Sigma$ is a class of arrows in $\mathbf B$, the bicategory of fractions
of $\mathbf B$ with respect to $\Sigma$ is a morphism of bicategories
$$P_{\Sigma} \colon \mathbf B \to \mathbf B[\Sigma^{-1}]$$
which encodes the universal solution to the problem of sending an arrow of $\Sigma$ into an equivalence.
This problem has been studied by D.\ Pronk in~\cite{Pronk}, providing a bicategorical version of right calculus of fractions.
\begin{enumerate}
\item[BF1.] $\Sigma$ contains the equivalences.
\item[BF2.] $\Sigma$ is closed under composition.
\item[BF3.] Given arrows $F$ and $S$ with same codomain, if $S \in \Sigma$ then there exist arrows 
$S' \in \Sigma$ and $F'$ and a 2-iso $S' \cdot F \cong F' \cdot S$.
$$\xymatrix{\ar[r]^-{F'} \ar[d]_{S'} & \ar[d]^{S} \\ \ar[r]_-{F} \ar@{}[ru]|{\cong} & }$$
\item[BF4.] For every 2-cell (resp.\ 2-iso) $\alpha \colon F \cdot W \Rightarrow G \cdot W$ with $W \in \Sigma$ 
there exist a $V \in \Sigma$ and a 2-cell (resp.\ 2-iso) $\beta \colon V \cdot F \Rightarrow V \cdot G$ such that 
$V \cdot \alpha = \beta \cdot W$ 
$$\xymatrix{\ar[r]^-{V} & \ar@<0,5ex>[r]^-{F} \ar@<-0,5ex>[r]_-{G} & \ar[r]^-{W} & }$$
and for any two such pairs $(V,\beta)$ and $(V', \beta')$, there exist 
$U, U'$ and a 2-iso $\varepsilon \colon U \cdot V \Rightarrow U' \cdot V'$
$$\xymatrix{&\ar[rd]^-{V} \ar@{}[dd]|(.35){}="a" \ar@{}[dd]|(.65){}="b" \ar@{=>}"a";"b"^(.3)*[@]{\sim}_-{\varepsilon} &&& \\ \ar[ru]^-{U} \ar[rd]_-{U'} && \ar@<0,5ex>[r]^-{F} \ar@<-0,5ex>[r]_-{G} & \ar[r]^-{W} & \\ & \ar[ru]_-{V'} &&&}$$
such that $U \cdot V \in \Sigma$ and the diagram
$$\xymatrix{U \cdot V \cdot F \ar@{=>}[rr]^{U \cdot \beta} \ar@{=>}[d]_{\varepsilon
\cdot F} & & U \cdot V \cdot G \ar@{=>}[d]^{\varepsilon \cdot G} \\
U' \cdot V' \cdot F \ar@{=>}[rr]_{U' \cdot \beta'} & & U' \cdot V' \cdot G}$$
commutes.
\item[BF5.] Given arrows $F,G$ and a 2-iso $F \cong G$, then $F \in \Sigma$ if and only if $G \in \Sigma$.
\end{enumerate}
}\end{Text}

\begin{Text}\label{TextBicatFractDescr}{\rm
If the class $\Sigma$ has a right calculus of fractions, the bicategory $\mathbf B[\Sigma^{-1}]$ can be described as follows:
\begin{enumerate}
\item[-] The objects of $\mathbf B[\Sigma^{-1}]$ are those of $\mathbf B$.
\item[-] 1-cells $\bA \rightarrow \bB$ in $\mathbf B[\Sigma^{-1}]$ are spans $(W,F)$ with $W \in \Sigma$.
$$\cd{\bA & \bC \ar[l]_-{W} \ar[r]^-{F} & \bB}$$
\item[-] 2-cells $(W,F) \Rightarrow (V,G)$ are equivalent classes of quadruples $(U_1,U_2,\alpha_1, \alpha_2)$ where $U_1 \cdot W \in \Sigma$,
$\alpha_1 \colon U_1 \cdot W \Rightarrow U_2 \cdot V$ is a 2-iso and $\alpha_2 \colon U_1 \cdot F \Rightarrow U_2 \cdot G$ is a 2-cell.
$$\cd{ & \bC \ar[rd]^-{W} \ar@/^/[rrrd]^-{F} \ar@{}[dd]|(.35){}="a" \ar@{}[dd]|(.65){}="b" \ar@{=>}"a";"b"^(.35)*[@]{\sim}_-{\alpha_1} && \ar@{}@<-9pt>[dd]|(.35){}="c" \ar@{}@<-9pt>[dd]|(.65){}="d" & \\ \bE \ar[ru]^-{U_1} \ar[rd]_{U_2} && \bA && \bB \\
& \bD \ar[ru]_{V} \ar@/_/[rrru]_{G} &&& \ar@{=>}"c";"d"^-{\alpha_2}}$$
Two quadruples $(U_1,U_2,\alpha_1, \alpha_2)$ and $(U_1',U_2',\alpha_1', \alpha_2')$ are equivalent when there exist
$$\cd{ &&& \bC \ar[rd]^-{W} \ar@/^/[rrrd]^-{F} \ar@{}[dd]|(.35){}="a" \ar@{}[dd]|(.65){}="b" \ar@{=>}"a";"b"^(.35)*[@]{\sim}_-{\alpha_1} && \ar@{}@<-9pt>[dd]|(.35){}="c" \ar@{}@<-9pt>[dd]|(.65){}="d" & \\
\bE' \ar@/^1pc/[rrru]^-{U_1'}="U1'" \ar@/_1pc/[rrrd]_-{U_2'}="U2'" & \bF \ar[r]^-{R_1} \ar[l]_-{R_2} & \bE \ar[ru]^-{U_1} \ar[rd]_{U_2} && \bA && \bB \\
&&& \bD \ar[ru]_{V} \ar@/_/[rrru]_{G} &&& 
\ar@{=>}"c";"d"^-{\alpha_2} \ar@{=>}"U1'";[llllu]^-{\gamma_1}_(.4)*[@]-{\sim} \ar@{=>}[llllu];"U2'"^-{\gamma_2}_(.3)*[@]-{\sim}}$$
$R_1 \colon \bF \rightarrow \bE$, $R_2 \colon \bF \rightarrow \bE'$ and two 2-isos
$\gamma_1 \colon R_2 \cdot U_1' \Rightarrow R_1 \cdot U_1$ and $\gamma_2 \colon R_1 \cdot U_2 \Rightarrow R_2 \cdot U_2'$ such that 
$R_1 \cdot U_1 \cdot W \in \Sigma$ and
the following diagrams commute.
$$\cd{R_2 \cdot U_1' \cdot W \ar@{=>}[r]^-{\gamma_1 \cdot W} \ar@{=>}[d]_-{R_2 \cdot \alpha_1'} & 
R_1 \cdot U_1 \cdot W \ar@{=>}[d]^-{R_1 \cdot \alpha_1} \\
R_2 \cdot U_2' \cdot V & R_1 \cdot U_2 \cdot V \ar@{=>}[l]^-{\gamma_2 \cdot V}}
\qquad \cd{R_2 \cdot U_1' \cdot F \ar@{=>}[r]^-{\gamma_1 \cdot F} \ar@{=>}[d]_-{R_2 \cdot \alpha_2'} & 
R_1 \cdot U_1 \cdot F \ar@{=>}[d]^-{R_1 \cdot \alpha_2} \\
R_2 \cdot U_2' \cdot G & R_1 \cdot U_2 \cdot G \ar@{=>}[l]^-{\gamma_2 \cdot G}}$$
\item[-] $1$-cells composition  is obtained by completing two consecutive spans with the span provided by BF3, and then forgetting the $2$-cell. Vertical composition of $2$-cells is obtained by pasting two vertically consecutive 2-cells with a square coming from BF3, horizontal composition is described by a vertical composition of two whiskerings. More details on compositions and identities can be found in \cite{Pronk}.

\end{enumerate}
The 2-isos in $\mathbf B[\Sigma^{-1}]$ are exactly the 2-cells which can be represented by a quadruple $(U_1,U_2,\alpha_1, \alpha_2)$
where $\alpha_2$ is a 2-iso of $\mathbf B$. Moreover, the universal morphism 
$$P_{\Sigma} \colon \mathbf B \rightarrow \mathbf B[\Sigma^{-1}]$$ 
is defined by: $P_{\Sigma}(\bA)=\bA$, $P_{\Sigma}(F)=(1_{\bA},F)$ and $P_{\Sigma}(\alpha)=[(1_{\bA},1_{\bA},1_{1_{\bA}},\alpha)]$. 
For more details, see~\cite{Pronk}.
}\end{Text}

\begin{Text}\label{TextBipb}{\rm
Recall that a bipullback of two arrows $F \colon \bA \to \bB$ and $G \colon \bC \to \bB$ in a bicategory $\mathbf B$
is a diagram of the form
$$\xymatrix{\bP \ar[r]^{G'} \ar[d]_{F'} & \bA \ar[d]^{F} \\
\bC \ar@{}[ru]|(.35){}="a" \ar@{}[ru]|(.65){}="z" \ar@{=>}"a";"z"_{\pi} \ar@{}@<-6pt>"a";"z"^*[@]{\sim} \ar[r]_{G} & \bB}$$
satisfying the following universal property: 
\begin{enumerate}
\item[BP1.] For any diagram of the form 
$$\xymatrix{\bX \ar[r]^{H} \ar[d]_{K} & \bA \ar[d]^{F} \\
\bC \ar@{}[ru]|(.35){}="a" \ar@{}[ru]|(.65){}="z" \ar@{=>}"a";"z"_{\mu} \ar@{}@<-6pt>"a";"z"^*[@]{\sim} \ar[r]_{G} & \bB}$$
there exist an arrow $T \colon \bX \to \bP$ and 2-isos $\gamma \colon T \cdot G' \Rightarrow H, \delta \colon T \cdot F' \Rightarrow K$ 
making commutative the following diagram.
$$\cd{T \cdot F' \cdot G \ar@{=>}[d]_-{T \cdot \pi} \ar@{=>}[r]^-{\delta \cdot G} & K \cdot G \ar@{=>}[d]^-{\mu} \\
T \cdot G' \cdot F \ar@{=>}[r]_-{\gamma \cdot F} & H \cdot F}$$
\item[BP2.] Given 1-cells $T, S \colon \bX \rightrightarrows \bP$ and 2-isos $\alpha \colon T \cdot F' \Rightarrow  S \cdot F'$ and
$\beta \colon T \cdot G' \Rightarrow S \cdot G'$, if
$$\xymatrix{T \cdot F' \cdot G \ar@{=>}[r]^-{\alpha \cdot G} \ar@{=>}[d]_{T \cdot \pi} & S \cdot F' \cdot G \ar@{=>}[d]^{S \cdot \pi} \\
T \cdot G' \cdot F \ar@{=>}[r]_-{\beta \cdot F} & S \cdot G' \cdot F}$$
commutes, then there exists a unique 2-cell $\varphi \colon T \Rightarrow S$ such that $\varphi \cdot F' = \alpha$ and $\varphi \cdot G' = \beta$.
(Observe that the 2-cell $\varphi$ of BP2 is actually a 2-iso.)
\end{enumerate}
}\end{Text}

\begin{Text}\label{TextBipbHpb}{\rm
In~\cite{SnailMMVShort}, we focused on strong h-pullbacks instead of bipullbacks. The universal property of strong h-pullbacks 
subsumes that of bipullbacks, but strong h-pullbacks are determined up to isomorphism, whereas bipullbacks are determined up to 
equivalence. This is why the notion of bipullback is the `correct' notion of 2-dimensional pullback in the context of bicategories of fractions.
Moreover, we will use the fact that pasting together bipullbacks we still get a bipullback (which is not the case if we work with strong h-pullbacks).
See~\cite{Ben, JMMVFibr} for more details on bipullbacks and strong h-pullbacks.
}\end{Text}

\section{Preservation of bipullbacks}

This section is devoted to the proof of the following proposition, that generalises the 1-dimensional case.

\begin{Proposition}\label{PropBPBFract1}
Let $\mathbf B$ be a bicategory with bipullbacks and $\Sigma$ a class of arrows in $\mathbf B$ having a right calculus of 
fractions. Then $\mathbf B[\Sigma^{-1}]$ has bipullbacks and the universal morphism $P_{\Sigma} \colon \mathbf B \rightarrow \mathbf B[\Sigma^{-1}]$ preserves them.
\end{Proposition}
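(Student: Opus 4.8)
The plan is to construct the bipullback of a cospan in $\mathbf B[\Sigma^{-1}]$ directly from a bipullback in $\mathbf B$, and then to verify the universal property BP1--BP2 using the explicit description of 1- and 2-cells in \ref{TextBicatFractDescr}. Given two 1-cells $(W,F)\colon\bA\to\bB$ and $(V,G)\colon\bC\to\bB$ in $\mathbf B[\Sigma^{-1}]$, represented by spans $\bA\xleftarrow{W}\bC'\xrightarrow{F}\bB$ and $\bC\xleftarrow{V}\bC''\xrightarrow{G}\bB$, I first use BF3 to complete the cospan $\bC'\xrightarrow{F}\bB\xleftarrow{G}\bC''$ up to a 2-iso — that is, I would actually take a genuine \emph{bipullback} of $F$ and $G$ in $\mathbf B$, say with vertex $\bQ$ and projections $P_1\colon\bQ\to\bC'$, $P_2\colon\bQ\to\bC''$ and comparison 2-iso $\pi\colon P_1\cdot F\Rightarrow P_2\cdot G$. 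The candidate bipullback of $(W,F)$ and $(V,G)$ in $\mathbf B[\Sigma^{-1}]$ then has vertex $\bQ$, with the two legs being the spans $(P_1\cdot W\text{-completion},\dots)$; more precisely, since $W,V\in\Sigma$, I need one more application of BF3 to pull $W$ back along $P_1$ and $V$ back along $P_2$ so as to get legs that are themselves spans with left leg in $\Sigma$ — alternatively, and more cleanly, I would take the bipullback $\bP$ in $\mathbf B$ of the composable data obtained by first bipulling back $W$ along the map to $\bA$, and note that $P_\Sigma$ sends it to the desired object. The cleanest route: bipullback $\bP$ of $F$ and $G$ in $\mathbf B$, legs $F'\colon\bP\to\bC''$ (over $\bC$ via $V$) and $G'\colon\bP\to\bC'$ (over $\bA$ via $W$); then the legs of the bipullback square in $\mathbf B[\Sigma^{-1}]$ are $(G'\cdot W\text{'s left part})$. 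I will set this up so that the left legs lie in $\Sigma$ using BF2 and BF3.

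The key steps, in order, are: (i) build $\bP$ as a bipullback in $\mathbf B$ of $F$ and $G$, obtaining $F',G',\pi$; (ii) produce the comparison 2-iso for the square $P_\Sigma$ applied to $\bP$, which is essentially $P_\Sigma(\pi)$ together with the coherence isomorphisms identifying composites of spans — here the description of composition of 1-cells in \ref{TextBicatFractDescr} (complete two spans by BF3, forget the 2-cell) is what makes the square commute up to the required 2-iso; (iii) verify BP1: given a competing square over $(W,F),(V,G)$ in $\mathbf B[\Sigma^{-1}]$ with apex $\bX$, unwind the data (a span $\bX\leftarrow\bX'\to\cdots$ and 2-cells of $\mathbf B[\Sigma^{-1}]$, each itself a quadruple), use BF4 repeatedly to replace the various 2-isos $\alpha_i$ by 2-isos of the form $V\cdot(-)$ after precomposing with something in $\Sigma$, so that the competing cone is represented, up to the equivalence relation on 2-cells, by honest data in $\mathbf B$ landing in $\bC',\bC''$ and compatible up to $\pi$; then invoke the universal property BP1 of $\bP$ in $\mathbf B$ to get the mediating 1-cell, and push it down through $P_\Sigma$; (iv) verify BP2 similarly, reducing a pair of 2-cells in $\mathbf B[\Sigma^{-1}]$ between mediating maps to a pair of 2-cells in $\mathbf B$ via BF4, applying BP2 in $\mathbf B$, and checking the resulting 2-cell of $\mathbf B[\Sigma^{-1}]$ is well-defined and unique using the equivalence relation on quadruples; (v) observe that since $P_\Sigma(\bP)$ has the universal property and every object of $\mathbf B[\Sigma^{-1}]$ is in the image of $P_\Sigma$, a bipullback exists for every cospan in $\mathbf B[\Sigma^{-1}]$ — but wait, a general cospan in $\mathbf B[\Sigma^{-1}]$ need not be in the image of $P_\Sigma$; so step (i) must genuinely start from spans, and the bipullback in $\mathbf B$ is taken not of $F$ and $G$ alone but of the full zig-zags, which is where BF3 is used to first replace the left legs. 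I will organise the write-up so that this "span version" is done from the start.

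The main obstacle I expect is \textbf{step (iii)}, the verification of BP1, and specifically the bookkeeping needed to show that an arbitrary competing cone — whose 2-cells are only equivalence classes of quadruples involving auxiliary arrows in $\Sigma$ — can be rectified, using finitely many applications of BF4 (and its uniqueness clause) together with BF2, to a cone to which the strict universal property of the bipullback $\bP$ in $\mathbf B$ applies. Managing the coherence isomorphisms (suppressed via \cite{MP}) and checking that the two commuting-square conditions in BP1 descend correctly through $P_\Sigma$ is delicate but mechanical; the conceptual content is entirely in (i)--(ii) and in the rectification argument. A secondary, purely formal obstacle is ensuring the various left legs we construct lie in $\Sigma$: this is handled each time by BF3 (which outputs an arrow of $\Sigma$) and BF2 (closure under composition), and once by BF5 to transport membership across the coherence 2-isos. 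Once BP1 and BP2 are checked, preservation by $P_\Sigma$ is immediate from the construction, since $P_\Sigma(\bP)$ with legs $(1,F'),(1,G')$ and comparison $[(1,1,1,\pi)]$ is exactly the bipullback produced above in the special case $W=1_\bA$, $V=1_\bC$.
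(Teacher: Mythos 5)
Your construction of the candidate bipullback agrees with the paper's: the vertex is the bipullback $\bP$ in $\mathbf B$ of the \emph{right} legs of the two spans, and the projections are the spans with identity left leg whose right legs are the composites of the projections of $\bP$ with $W$ and $V$. Where you diverge is in the verification. You propose to check BP1 and BP2 directly for the general square, which forces you to unwind composites of arbitrary spans (each such composite requires a BF3 completion, so even writing down the comparison 2-iso of your square is nontrivial) and then to rectify arbitrary competing cones. The paper instead verifies the universal property only for the image square $P_{\Sigma}(\pi)$ --- i.e.\ the case where all left legs are identities, so that composition of 1-cells is strict and the comparison 2-iso is literally $P_{\Sigma}(\pi)$ --- and then obtains the general bipullback by pasting this square with squares along the equivalences $(1,W)$ and $(1,V)$: squares along an equivalence are bipullbacks for trivial reasons, and a pasting of bipullbacks is a bipullback, as recalled in \ref{TextBipbHpb}. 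This reduction is what makes the proof tractable: all the content then lives in the BP1/BP2 check for $P_{\Sigma}(\pi)$, where BP1 needs no rectification at all (the $\mathbf B$-component $\mu_2$ of a quadruple representing the competing 2-iso is already a cone over the cospan of $\pi$ in $\mathbf B$), and the real work is the uniqueness half of BP2, which occupies most of the paper's proof through a long chain of BF2/BF3/BF4 applications. Two further remarks. First, you do not need BF3 to form the legs of your square: a leg $\bP\to\bA$ is simply the span $\bP\xleftarrow{1}\bP\xrightarrow{G'\cdot W}\bA$, whose left leg is an identity (in $\Sigma$ by BF1); BF3 enters only when you compose that leg with $(W,F)$, and your remark about taking ``the bipullback of the full zig-zags'' is a red herring, since $W$ and $V$ point away from $\bB$ and play no role in the cospan whose bipullback is taken. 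Second, calling the BP2 uniqueness argument ``delicate but mechanical'' underestimates it --- it is the bulk of the proof --- and carrying it out for the general square rather than for $P_{\Sigma}(\pi)$ would make it strictly worse. Your plan is viable, but you should adopt the pasting reduction before attempting the verification.
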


\begin{proof}
Consider two arrows in $\mathbf B[\Sigma^{-1}]$
$$\xymatrix{ & \bF \ar[ld]_{S} \ar[rd]^{R} \\ \bA && \bB} \qquad \xymatrix{ & \bG \ar[ld]_{U} \ar[rd]^{T} \\ \bC && \bB}$$
with $S, U \in \Sigma,$ and the bipullback of the right legs in $\mathbf B$.
$$\xymatrix{\bP \ar[d]_{R'} \ar[r]^-{T'} & \bF \ar[d]^{R} \\
\bG \ar[r]_-{T} \ar@{}[ru]|(.35){}="a" \ar@{}[ru]|(.65){}="z" \ar@{=>}"a";"z"_{\pi} \ar@{}@<-6pt>"a";"z"^*[@]{\sim} & \bB}$$
If we can prove that 
$$\xymatrix{\bP \ar[d]_{(1,R')} \ar[rr]^-{(1,T')} && \bF \ar[d]^{(1,R)} \\
\bG \ar[rr]_-{(1,T)} \ar@{}[rru]|(.35){}="a" \ar@{}[rru]|(.65){}="z" \ar@{=>}"a";"z"_-{P_{\Sigma}(\pi)} \ar@{}@<-4pt>"a";"z"^*[@]{\sim} && \bB}$$
is still a bipullback in $B[\Sigma^{-1}]$, then the bipullback of $(S,R)$ and $(U,T)$ in $B[\Sigma^{-1}]$ is obtained pasting together the four bipullbacks below.
$$\xymatrix{\bP \ar[d]_{(1,R')} \ar[rr]^-{(1,T')} & & \bF \ar[d]^{(1,R)} \ar[rr]^-{(1,S)} & & \bA \ar[d]^{(S,R)} \\
\bG \ar[d]_{(1,U)} \ar[rr]_-{(1,T)} \ar@{}[rru]|(.35){}="a" \ar@{}[rru]|(.65){}="z" \ar@{=>}"a";"z"_-{P_{\Sigma}(\pi)} \ar@{}@<-4pt>"a";"z"^*[@]{\sim} & &
\bB \ar[d]^{1_{\bB}} \ar[rr]_-{1_{\bB}} \ar@{}[rru]|{\cong} & & \bB \ar[d]^{1_{\bB}} \\
\bC \ar[rr]_-{(U,T)} \ar@{}[rru]|{\cong} & & \bB \ar[rr]_-{1_{\bB}} \ar@{}[rru]|{\cong} & & \bB}$$

The fact that the lower-right square is a bipullback is obvious, and that the upper-right and lower-left are bipullbacks is easily proved. Indeed, for the upper-right square, just compose the obvious bipullback
$$
\xymatrix@C=11ex{
\bA\ar[r]^{1_{\bA}}\ar[d]_{(S,R)}\ar@{}[dr]|{\cong}
&\bA\ar[d]^{(S,R)}
\\
\bB\ar[r]_{1_{\bB}}
&\bB
}
$$ 
with the equivalence $(1,S)\colon \bF \to \bA$. The lower-left one is treated similarly.

Hence let us prove that the upper left square is a bipullback. 
We are going to prove separately the two parts of the universal property of the bipullback, that is, BP1 and BP2.

BP1. Consider a diagram in $\mathbf B[\Sigma^{-1}]$
$$\xymatrix{\bV \ar[d]_{(Y_1,Y_2)} \ar[rr]^-{(X_1,X_2)} && \bF \ar[d]^{(1,R)} \\
\bG \ar[rr]_-{(1,T)} \ar@{}[rru]|(.35){}="a" \ar@{}[rru]|(.65){}="z" \ar@{=>}"a";"z"_-{\mu} \ar@{}@<-4pt>"a";"z"^*[@]{\sim} && \bB}$$
where $\mu$ is represented by
$$\cd{ & \bY \ar[rd]^-{Y_1} \ar@/^/[rrrd]^-{Y_2 \cdot T}="F" \ar@{}[dd]|(.35){}="a" \ar@{}[dd]|(.65){}="b" \ar@{=>}"a";"b"^(.35)*[@]{\sim}_-{\mu_1} && \ar@{}@<-9pt>[dd]|(.35){}="c" \ar@{}@<-9pt>[dd]|(.65){}="d" & \\ \bE \ar[ru]^-{U_1} \ar[rd]_{U_2} 
&& \bV && \bB \\
& \bX \ar[ru]_{X_1} \ar@/_/[rrru]_{X_2 \cdot R}="G" &&& \ar@{=>}"c";"d"^(.35)*[@]{\sim}_-{\mu_2}}$$
We thus get the following diagram in $\mathbf B$
$$\cd{\bE \ar[r]^{U_2 \cdot X_2} \ar[d]_{U_1 \cdot Y_2} & \bF \ar[d]^{R} \\
\bG \ar@{}[ru]|(.35){}="a" \ar@{}[ru]|(.65){}="z" \ar@{=>}"a";"z"_{\mu_2} \ar@{}@<-6pt>"a";"z"^*[@]{\sim} \ar[r]_{T} & \bB}$$
and by the universal property of the bipullback BP1, an arrow $L \colon \bE \rightarrow \bP$ and two 2-isos 
$\gamma \colon L \cdot T' \Rightarrow U_2 \cdot X_2$ and
$\delta \colon L \cdot R' \Rightarrow U_1 \cdot Y_2$ which make the diagram
$$\cd{L \cdot R' \cdot T \ar@{=>}[d]_-{L \cdot \pi} \ar@{=>}[r]^-{\delta \cdot T} & U_1 \cdot Y_2 \cdot T \ar@{=>}[d]^-{\mu_2} \\
L \cdot T' \cdot R \ar@{=>}[r]_-{\gamma \cdot R} & U_2 \cdot X_2 \cdot R}$$
commutative. This gives us a 1-cell $(U_1 \cdot Y_1, L) \colon \bV \rightarrow \bP$ in $\mathbf B[\Sigma^{-1}]$ and two 2-isos 
$$[(1_{\bE},U_2,\mu_1,\gamma)] \colon (U_1 \cdot Y_1, L) \cdot (1_{\bP},T') \Rightarrow (X_1,X_2)$$ and
$$[(1_{\bE},U_1,1_{U_1 \cdot Y_1},\delta)] \colon (U_1 \cdot Y_1, L) \cdot (1_{\bP},R') \Rightarrow (Y_1,Y_2).$$
$$\cd{ & \bE \ar[rd]|-{U_1 \cdot Y_1} \ar@/^/[rrrd]^-{L \cdot T'}="F" \ar@{}[dd]|(.35){}="a" \ar@{}[dd]|(.65){}="b" \ar@{=>}"a";"b"^(.35)*[@]{\sim}_-{\mu_1} && \ar@{}@<-9pt>[dd]|(.35){}="c" \ar@{}@<-9pt>[dd]|(.65){}="d" & \\
 \bE \ar[ru]^-{1_{\bE}} \ar[rd]_{U_2} && \bV && \bF \\
& \bX \ar[ru]|-{X_1} \ar@/_/[rrru]_{X_2}="G" &&& \ar@{=>}"c";"d"^(.35)*[@]{\sim}_-{\gamma}} \qquad
\cd{ & \bE \ar[rd]|-{U_1 \cdot Y_1} \ar@/^/[rrrd]^-{L \cdot R'}="F" \ar@{}[dd]|(.35){}="a" \ar@{}[dd]|(.65){}="b" \ar@{=>}"a";"b"^(.35)*[@]{\sim}_-{1_{U_1 \cdot Y_1}} && \ar@{}@<-9pt>[dd]|(.35){}="c" \ar@{}@<-9pt>[dd]|(.65){}="d" & \\ \bE \ar[ru]^-{1_{\bE}} \ar[rd]_{U_1} && \bV && \bG \\
& \bY \ar[ru]|-{Y_1} \ar@/_/[rrru]_{Y_2}="G" &&& \ar@{=>}"c";"d"^(.35)*[@]{\sim}_-{\delta}}$$
The compatibility condition linking those two 2-isos can be deduced from the one linking $\gamma$ and $\delta$.

BP2. Now, suppose we have two arrows $\bV \rightrightarrows \bP$ in $\mathbf B[\Sigma^{-1}]$
$$\cd{ & \bH_i \ar[ld]_{W_i} \ar[rd]^{H_i} \\ \bV && \bP}$$
for $i \in \{1,2\}$ together with two 2-isos 
$$\alpha=[(U_1,U_2,\alpha_1,\alpha_2)] \colon (W_1,H_1) \cdot (1_{\bP},R') \Rightarrow (W_2,H_2) \cdot (1_{\bP},R')$$ and
$$\beta=[(V_1,V_2,\beta_1,\beta_2)] \colon (W_1,H_1) \cdot (1_{\bP},T') \Rightarrow (W_2,H_2) \cdot (1_{\bP},T').$$
$$\cd{ & \bH_1 \ar[rd]^-{W_1} \ar@/^/[rrrd]^-{H_1 \cdot R'}="F" \ar@{}[dd]|(.35){}="a" \ar@{}[dd]|(.65){}="b" \ar@{=>}"a";"b"^(.35)*[@]{\sim}_-{\alpha_1} && \ar@{}@<-9pt>[dd]|(.35){}="c" \ar@{}@<-9pt>[dd]|(.65){}="d" & \\ \bE_1 \ar[ru]^-{U_1} \ar[rd]_{U_2} 
&& \bV && \bG \\
& \bH_2 \ar[ru]_{W_2} \ar@/_/[rrru]_{H_2 \cdot R'}="G" &&& \ar@{=>}"c";"d"^(.35)*[@]{\sim}_-{\alpha_2}} \qquad
\cd{ & \bH_1 \ar[rd]^-{W_1} \ar@/^/[rrrd]^-{H_1 \cdot T'}="F" \ar@{}[dd]|(.35){}="a" \ar@{}[dd]|(.65){}="b" \ar@{=>}"a";"b"^(.35)*[@]{\sim}_-{\beta_1} && \ar@{}@<-9pt>[dd]|(.35){}="c" \ar@{}@<-9pt>[dd]|(.65){}="d" & \\ \bE_2 \ar[ru]^-{V_1} \ar[rd]_{V_2} 
&& \bV && \bF \\
& \bH_2 \ar[ru]_{W_2} \ar@/_/[rrru]_{H_2 \cdot T'}="G" &&& \ar@{=>}"c";"d"^(.35)*[@]{\sim}_-{\beta_2}}$$
Suppose also that the diagram
$$\cd{(W_1,H_1) \cdot (1_{\bP},R') \cdot (1_{\bG},T) \ar@{=>}[rr]^-{\alpha \cdot (1_{\bG},T)} \ar@{=>}[d]_{(W_1,H_1) \cdot P_{\Sigma}(\pi)} &&
(W_2,H_2) \cdot (1_{\bP},R') \cdot (1_{\bG},T) \ar@{=>}[d]^{(W_2,H_2) \cdot P_{\Sigma}(\pi)} \\
(W_1,H_1) \cdot (1_{\bP},T') \cdot (1_{\bF},R) \ar@{=>}[rr]_-{\beta \cdot (1_{\bF},R)} && (W_2,H_2) \cdot (1_{\bP},T') \cdot (1_{\bF},R)}$$
commutes in $\mathbf B[\Sigma^{-1}]$. This means that there exist $S_1 \colon \bK \rightarrow \bE_1$, $S_2 \colon \bK \rightarrow \bE_2$ and two 2-isos
$$\cd{ &&& \bH_1 \ar[rd]^-{W_1} \ar@/^/[rrrrrd]^(.35){H_1 \cdot R' \cdot T}="F" \ar@{}[dd]|(.35){}="a" \ar@{}[dd]|(.65){}="b" \ar@{=>}"a";"b"^(.35)*[@]{\sim}_-{\alpha_1} && \ar@{}@<-9pt>[dd]|(.35){}="c" \ar@{}@<-9pt>[dd]|(.65){}="d" &&& \\
\bE_2 \ar@/^1pc/[rrru]^-{V_1}="U1'" \ar@/_1pc/[rrrd]_-{V_2}="U2'" & \bK \ar[r]^-{S_1} \ar[l]_-{S_2} & \bE_1 \ar[ru]^-{U_1} \ar[rd]_{U_2} && \bV &&&& \bB \\
&&& \bH_2 \ar[ru]_{W_2} \ar@/_/[rrrrru]_(.35){H_2 \cdot T' \cdot R}="G" &&&&& \ar@{=>}"c";"d"_(.45)*[@]-{\sim}^-{(\alpha_2 \cdot T) \circ (U_2 \cdot H_2 \cdot \pi)}
\ar@{=>}"U1'";[llllllu]^-{\gamma_1}_(.4)*[@]-{\sim} \ar@{=>}[llllllu];"U2'"^-{\gamma_2}_(.3)*[@]-{\sim}}$$
$\gamma_1 \colon S_2 \cdot V_1 \Rightarrow S_1 \cdot U_1$ and $\gamma_2 \colon S_1 \cdot U_2 \Rightarrow S_2 \cdot V_2$ such that $S_1 \cdot U_1 \cdot W_1 \in \Sigma$ and
the following diagrams commute.
\begin{equation} \label{cond gamma 1}
\vcenter{\cd{S_2 V_1 W_1 \ar@{=>}[r]^-{\gamma_1 \cdot W_1} \ar@{=>}[d]_-{S_2 \cdot \beta_1} & S_1 U_1 W_1 \ar@{=>}[d]^-{S_1 \cdot \alpha_1} \\
S_2 V_2 W_2 & S_1 U_2 W_2 \ar@{=>}[l]^-{\gamma_2 \cdot W_2}}}
\end{equation}
\begin{equation} \label{cond gamma 2}
\vcenter{\cd{ S_1 U_1 H_1 R' T \ar@{=>}[rr]^{S_1 \cdot \alpha_2 \cdot T} && S_1 U_2 H_2 R' T \ar@{=>}[rr]^{S_1U_2H_2 \cdot \pi} &&
S_1 U_2 H_2 T' R \ar@{=>}[d]^-{\gamma_2 \cdot H_2T'R} \\
S_2 V_1 H_1 R' T \ar@{=>}[u]^-{\gamma_1 \cdot H_1R'T} \ar@{=>}[rr]_-{S_2V_1H_1 \cdot \pi} && S_2 V_1 H_1 T' R \ar@{=>}[rr]_-{S_2 \cdot \beta_2 \cdot R} && S_2 V_2 H_2 T' R}}
\end{equation}
Now, we consider the following 2-isos: $$\cd{S_1U_1H_1R' \ar@{=>}[r(.7)]^(.72){S_1 \cdot \alpha_2} & \,\, S_1U_2H_2R'}$$ and
$$\cd{S_1U_1H_1T' \ar@{=>}[rr]^-{\gamma_1^{-1} \cdot H_1 \cdot T'} && S_2V_1H_1T' \ar@{=>}[rr]^-{S_2 \cdot \beta_2} && S_2V_2H_2T' \ar@{=>}[rr]^-{\gamma_2^{-1} \cdot H_2 \cdot T'}
&& S_1U_2H_2T'.}$$
The commutativity of (\ref{cond gamma 2}) and the property BP2 of the bipullback at $\bP$ give us a 2-iso $\delta \colon S_1U_1H_1 \Rightarrow S_1U_2H_2$ such that 
$\delta \cdot R'= S_1 \cdot \alpha_2$ and $$\delta \cdot T'= (\gamma_1^{-1} \cdot H_1 \cdot T') \circ (S_2 \cdot \beta_2) \circ (\gamma_2^{-1} \cdot H_2 \cdot T').$$
So, we can construct a 2-iso $\varphi=[(S_1 \cdot U_1, S_1 \cdot U_2,S_1 \cdot \alpha_1, \delta)] \colon (W_1,H_1) \Rightarrow (W_2,H_2)$.
$$\cd{ & \bH_1 \ar[rd]^-{W_1} \ar@/^/[rrrd]^-{H_1}="F" \ar@{}[dd]|(.35){}="a" \ar@{}[dd]|(.65){}="b" \ar@{=>}"a";"b"^(.35)*[@]{\sim}_-{S_1 \cdot \alpha_1} && \ar@{}@<-9pt>[dd]|(.35){}="c" \ar@{}@<-9pt>[dd]|(.65){}="d" & \\ \bK \ar[ru]^-{S_1 \cdot U_1} \ar[rd]_{S_1 \cdot U_2} && \bV && \bP \\
& \bH_2 \ar[ru]_{W_2} \ar@/_/[rrru]_{H_2}="G" &&& \ar@{=>}"c";"d"^(.35)*[@]{\sim}_-{\delta}}$$
The identities $\varphi \cdot (1_{\bP},R')= \alpha$ and $\varphi \cdot (1_{\bP},T')= \beta$ follow from the diagrams
$$\cd{ &&& \bH_1 \ar[rd]^-{W_1} \ar@/^/[rrrrd]^-{H_1 \cdot R'}="F" \ar@{}[dd]|(.35){}="a" \ar@{}[dd]|(.65){}="b" \ar@{=>}"a";"b"^(.35)*[@]{\sim}_-{\alpha_1} && \ar@{}@<-5pt>[dd]|(.35){}="c" \ar@{}@<-5pt>[dd]|(.65){}="d" && \\
\bK \ar@/^1pc/[rrru]^-{S_1 \cdot U_1}="U1'" \ar@/_1pc/[rrrd]_-{S_1 \cdot U_2}="U2'" & \bK \ar[r]^-{S_1} \ar[l]_-{1_{\bK}} & \bE_1 \ar[ru]^-{U_1} \ar[rd]_{U_2} && \bV &&& \bG \\
&&& \bH_2 \ar[ru]_{W_2} \ar@/_/[rrrru]_-{H_2 \cdot R'}="G" &&&& \ar@{=>}"c";"d"^(.35)*[@]{\sim}_-{\alpha_2}
\ar@{=>}"U1'";[lllllu]^-{1}_(.4)*[@]-{\sim} \ar@{=>}[lllllu];"U2'"^-{1}_(.3)*[@]-{\sim}}$$
and
$$\cd{ &&& \bH_1 \ar[rd]^-{W_1} \ar@/^/[rrrrd]^-{H_1 \cdot T'}="F" \ar@{}[dd]|(.35){}="a" \ar@{}[dd]|(.65){}="b" \ar@{=>}"a";"b"^(.35)*[@]{\sim}_-{S_1 \cdot \alpha_1} && \ar@{}@<2pt>[dd]|(.35){}="c" \ar@{}@<2pt>[dd]|(.65){}="d" && \\
\bE_2 \ar@/^1pc/[rrru]^-{V_1}="U1'" \ar@/_1pc/[rrrd]_-{V_2}="U2'" & \bK \ar[r]^-{1_{\bK}} \ar[l]_-{S_2} & \bK \ar[ru]|-{S_1 \cdot U_1} \ar[rd]|-{S_1 \cdot U_2} && \bV &&& \bF \\
&&& \bH_2 \ar[ru]_{W_2} \ar@/_/[rrrru]_-{H_2 \cdot T'}="G" &&&& \ar@{=>}"c";"d"^(.35)*[@]{\sim}_-{\delta \cdot T'}
\ar@{=>}"U1'";[lllllu]^-{\gamma_1}_(.4)*[@]-{\sim} \ar@{=>}[lllllu];"U2'"^-{\gamma_2}_(.3)*[@]-{\sim}}$$
where the coherence axioms can be deduced from the definition of $\delta$ and the commutativity of (\ref{cond gamma 1}).
It remains to prove the uniqueness of such a 2-cell $\varphi$.
Suppose $\varphi'=[(U_3,U_4,\varepsilon_1, \varepsilon_2)] \colon (W_1,H_1) \Rightarrow (W_2,H_2)$ satisfies $\varphi' \cdot (1_{\bP},R')= \alpha$ and $\varphi' \cdot (1_{\bP},T')= \beta$.
The first identity implies the existence of a diagram
$$\cd{ &&& \bH_1 \ar[rd]^-{W_1} \ar@/^/[rrrrd]^-{H_1 R'}="F" \ar@{}[dd]|(.35){}="a" \ar@{}[dd]|(.65){}="b" \ar@{=>}"a";"b"^(.35)*[@]{\sim}_-{\varepsilon_1} && \ar@{}@<-2pt>[dd]|(.35){}="c" \ar@{}@<-2pt>[dd]|(.65){}="d" && \\
\bE_1 \ar@/^1pc/[rrru]^-{U_1}="U1'" \ar@/_1pc/[rrrd]_-{U_2}="U2'" & \bE_4 \ar[r]^-{U_5} \ar[l]_-{U_6} & \bE_3 \ar[ru]^-{U_3} \ar[rd]_{U_4} && \bV &&& \bG \\
&&& \bH_2 \ar[ru]_{W_2} \ar@/_/[rrrru]_-{H_2 R'}="G" &&&& \ar@{=>}"c";"d"^-{\varepsilon_2 \cdot R'}
\ar@{=>}"U1'";[lllllu]^-{\varepsilon_3}_(.4)*[@]-{\sim} \ar@{=>}[lllllu];"U2'"^-{\varepsilon_4}_(.3)*[@]-{\sim}}$$
where $U_5U_3W_1 \in \Sigma$,
\begin{equation} \label{E4}
U_6 \cdot \alpha_1 = (\varepsilon_3 \cdot W_1) \circ (U_5 \cdot \varepsilon_1) \circ (\varepsilon_4 \cdot W_2)
\end{equation}
and
\begin{equation} \label{E4'}
U_6 \cdot \alpha_2 = (\varepsilon_3 \cdot H_1R') \circ (U_5 \cdot \varepsilon_2 \cdot R') \circ (\varepsilon_4 \cdot H_2R').
\end{equation}
Since $\varphi'=[(U_5U_3,U_5U_4,U_5 \cdot \varepsilon _1, U_5 \cdot \varepsilon_2)]$, the second identity means that there exists a diagram
$$\cd{ &&& \bH_1 \ar[rd]^-{W_1} \ar@/^/[rrrrd]^-{H_1 T'}="F" \ar@{}[dd]|(.35){}="a" \ar@{}[dd]|(.65){}="b" \ar@{=>}"a";"b"^(.35)*[@]{\sim}_-{U_5 \cdot \varepsilon_1} && \ar@{}@<-9pt>[dd]|(.35){}="c" \ar@{}@<-9pt>[dd]|(.65){}="d" && \\
\bE_2 \ar@/^1pc/[rrru]^-{V_1}="U1'" \ar@/_1pc/[rrrd]_-{V_2}="U2'" & \bE_5 \ar[r]^-{U_7} \ar[l]_-{U_8} & \bE_4 \ar[ru]|-{U_5U_3} \ar[rd]|-{U_5U_4} && \bV &&& \bF \\
&&& \bH_2 \ar[ru]_{W_2} \ar@/_/[rrrru]_-{H_2 T'}="G" &&&& \ar@{=>}"c";"d"^-{U_5 \cdot \varepsilon_2 \cdot T'}
\ar@{=>}"U1'";[lllllu]^-{\varepsilon_5}_(.4)*[@]-{\sim} \ar@{=>}[lllllu];"U2'"^-{\varepsilon_6}_(.3)*[@]-{\sim}}$$
where $U_7U_5U_3W_1 \in \Sigma$,
\begin{equation} \label{E5}
U_8 \cdot \beta_1 = (\varepsilon_5 \cdot W_1) \circ (U_7U_5 \cdot \varepsilon_1) \circ (\varepsilon_6 \cdot W_2)
\end{equation}
and
\begin{equation} \label{E5'}
U_8 \cdot \beta_2 = (\varepsilon_5 \cdot H_1T') \circ (U_7U_5 \cdot \varepsilon_2 \cdot T') \circ (\varepsilon_6 \cdot H_2T').
\end{equation}
Now, since $S_2V_1W_1$ and $V_1W_1$ are both in $\Sigma$, by axioms BF2, BF3 and BF4, there exist two arrows $U_9$ and $U_{10}$ with $U_9 \in \Sigma$ and a 2-iso
$\varepsilon_7 \colon U_9U_8 \Rightarrow U_{10}S_2$.
$$\cd{\bE_6 \ar[d]_-{U_9} \ar[r]^-{U_{10}} & \bK \ar[d]^-{S_2} \\
\bE_5 \ar[r]_-{U_8} \ar@{}[ru]|(.35){}="a" \ar@{}[ru]|(.65){}="z" \ar@{=>}"a";"z"_{\varepsilon_7} \ar@{}@<-6pt>"a";"z"^*[@]{\sim} & \bE_2}$$
Let us consider the 2-iso
$$\cd{U_9U_7U_6U_1 \ar@{=>}[r]^-{U_9U_7 \cdot \varepsilon_3} & U_9U_7U_5U_3 \ar@{=>}[r]^-{U_9 \cdot \varepsilon_5^{-1}} & U_9U_8V_1 \ar@{=>}[r]^-{\varepsilon_7 \cdot V_1}
& U_{10}S_2V_1 \ar@{=>}[r]^-{U_{10} \cdot \gamma_1} & U_{10}S_1U_1.}$$
Then, since $U_1W_1$ and $W_1$ are both in $\Sigma$, using the axiom BF4 twice, we get a 1-cell $U_{11} \colon \bE_7 \rightarrow \bE_6$ in $\Sigma$ and
a 2-iso $\varepsilon_8 \colon U_{11}U_9U_7U_6 \Rightarrow U_{11}U_{10}S_1$ such that
\begin{equation}\label{E7}
\varepsilon_8 \cdot U_1 =(U_{11}U_9U_7 \cdot \varepsilon_3) \circ (U_{11}U_9 \cdot \varepsilon_5^{-1}) \circ (U_{11} \cdot \varepsilon_7 \cdot V_1) \circ (U_{11}U_{10} \cdot \gamma_1).
\end{equation}
Using the identities (\ref{cond gamma 1}), (\ref{E4}), (\ref{E5}) and (\ref{E7}), we get that
$$(U_{11}U_9 \cdot \varepsilon_6 \cdot W_2) \circ (U_{11} \cdot \varepsilon_7 \cdot V_2W_2) =
(U_{11}U_9U_7 \cdot \varepsilon_4 \cdot W_2) \circ (\varepsilon_8 \cdot U_2W_2) \circ (U_{11}U_{10} \cdot \gamma_2 \cdot W_2).$$
Therefore, since $W_2 \in \Sigma$, by the axiom BF4, there exists an arrow $U_{12} \colon \bE_8 \rightarrow \bE_7$ in $\Sigma$ such that
\begin{equation} \label{E8}
(U_{12}U_{11}U_9 \cdot \varepsilon_6) \circ (U_{12}U_{11} \cdot \varepsilon_7 \cdot V_2) =
(U_{12}U_{11}U_9U_7 \cdot \varepsilon_4) \circ (U_{12} \cdot \varepsilon_8 \cdot U_2) \circ (U_{12}U_{11}U_{10} \cdot \gamma_2).
\end{equation}
Finally, to prove that $\varphi'=\varphi$, we consider the following diagram
$$\cd{ &&&&& \bH_1 \ar[rd]^-{W_1} \ar@/^/[rrrrd]^-{H_1}="F" \ar@{}[dd]|(.35){}="a" \ar@{}[dd]|(.65){}="b" \ar@{=>}"a";"b"^(.35)*[@]{\sim}_-{S_1 \cdot \alpha_1} && \ar@{}@<-9pt>[dd]|(.35){}="c" \ar@{}@<-9pt>[dd]|(.65){}="d" && \\
\bE_3 \ar@/^1.5pc/@<5pt>[rrrrru]^-{U_3}="U1'" \ar@/_1.5pc/@<-5pt>[rrrrrd]_-{U_4}="U2'" && \bE_8 \ar[rr]^-{U_{12}U_{11}U_{10}} \ar[ll]_-{U_{12}U_{11}U_9U_7U_5}
&& \bK \ar[ru]^-{S_1U_1} \ar[rd]_{S_1U_2} && \bV &&& \bP \\
&&&&& \bH_2 \ar[ru]_{W_2} \ar@/_/[rrrru]_-{H_2}="G" &&&& \ar@{=>}"c";"d"^(.35)*[@]{\sim}_-{\delta}}$$
with the 2-isos
$$(U_{12}U_{11}U_9 \cdot \varepsilon_5^{-1}) \circ (U_{12}U_{11} \cdot \varepsilon_7 \cdot V_1) \circ (U_{12}U_{11}U_{10} \cdot \gamma_1) \colon
U_{12}U_{11}U_9U_7U_5U_3 \Rightarrow U_{12}U_{11}U_{10}S_1U_1$$
and
$$(U_{12}U_{11}U_{10} \cdot \gamma_2) \circ (U_{12}U_{11} \cdot \varepsilon_7^{-1} \cdot V_2) \circ (U_{12}U_{11}U_9 \cdot \varepsilon_6^{-1}) \colon
U_{12}U_{11}U_{10}S_1U_2 \Rightarrow U_{12}U_{11}U_9U_7U_5U_4.$$
To prove the first coherence axiom, we use identities (\ref{cond gamma 1}) and (\ref{E5}), while for the second one, we use the universal property of the bipullback.
Indeed, to prove it, it suffices to compose each 2-cell with both $T'$ and $R'$. The one composed with $T'$ can be deduced from the definition of $\delta$ and (\ref{E5'}),
while the one composed with $R'$ follows from the definition of~$\delta$, (\ref{E4'}), (\ref{E7}) and (\ref{E8}).
\end{proof}

\section{The snail lemma for fractors}

In this section, $\cA$ is a pointed exact category. 

\begin{Text}\label{TextIntGrpd}{\rm

The 2-category $\Grpd(\cA)$ of internal groupoids in $\cA$ has bipullbacks and, in particular, bikernels (in fact, $\Grpd(\cA)$ has strong h-pullbacks, and so it has bipullbacks, 
see~\cite{FractEV, JMMVFibr}). Moreover, there are two 2-functors
$$\pi_0 \colon \Grpd(\cA) \to \cA \quad \text{and} \quad \pi_1 \colon \Grpd(\cA) \to \Grp(\cA)$$
(where $\cA$ and $\Grp(\cA)$, the category of internal groups in $\cA$, are seen as discrete 2-categories)
respectively defined by the following coequalizer and kernel
$$\xymatrix{A_1 \ar@<0,5ex>[r]^-{d} \ar@<-0,5ex>[r]_-{c} & A_0 \ar[r]^-{\eta_{\bA}} & \pi_0(\bA)}
\;\;\;\;\;\;\;\;\;\;
\xymatrix{\pi_1(\bA) \ar[r]^-{\epsilon_{\bA}} & A_1 \ar[r]^-{\langle d,c \rangle} & A_0 \times A_0}$$
where $d$ and $c$ are respectively the domain and codomain maps of the internal groupoid~$\bA$.
For an internal functor $F \colon \bA \to \bB$, its bikernel is defined as the bipullback
$$\xymatrix{\bK(F) \ar[r]^-{K(F)} \ar[d] & \bA \ar[d]^-{F} \\
0 \ar@{}[ru]|(.35){}="a" \ar@{}[ru]|(.65){}="z" \ar@{=>}"a";"z"_-{k(F)} \ar@{}@<-6pt>"a";"z"^*[@]{\sim} \ar[r] & \bB}$$
and it is represented by
$$\xymatrix{ \bK(F) \ar[r]^-{K(F)} & \bA \ar[r]^-{F} & \bB.}$$
The main result in~\cite{SnailMMVShort} states that for any internal functor $F$, there is an exact sequence
$$\xymatrix{\pi_1(\bK(F)) \ar[rr]^-{\pi_1(K(F))} & &  \pi_1(\bA) \ar[r]^{\pi_1(F)} & \pi_1(\bB) \ar[r]^-{ } 
& \pi_0(\bK(F)) \ar[rr]^-{\pi_0(K(F))} & & \pi_0(\bA) \ar[r]^{\pi_0(F)} & \pi_0(\bB).}$$
Here, the exactness at $B$ of 
$$\xymatrix{A \ar[r]^-{f} & B \ar[r]^-{g} & C}$$
is intended in the sense that $f$ factors as a regular epimorphism followed by the kernel of~$g$.
}\end{Text}

\begin{Text}\label{TextFract}{\rm
The class of weak equivalences in $\Grpd(\cA)$ has a right calculus of fractions (in the bicategorical sense)~\cite{FractEV}.
The bicategory of fractions of $\Grpd(\cA)$ with respect to this class of weak equivalences has been described in~\cite{MMV} 
(see also~\cite{AMMV} if $\cA$ is semi-abelian, \cite{JV} if $\cA$ is monadic, \cite{AbbV} if $\cA$ has enough regular 
projective objects, and~\cite{DR} for a description in terms of anafunctors). The objects are internal groupoids, and the 
arrows are particular profunctors called fractors: a fractor $E \colon \bA \looparrowright \bB$  is a diagram of the form
$$\xymatrix{ & R \ar[ld]_{\overline \sigma} \ar@<-0,5ex>[rd]_{d} \ar@<0,5ex>[rd]^{c} 
& & R[{\sigma}] \ar@<-0,5ex>[ld]_{\sigma_1} \ar@<0,5ex>[ld]^{\sigma_2} \ar[rd]^{\overline \rho} \\
A_1 \ar@<-0,5ex>[rd]_{d} \ar@<0,5ex>[rd]^{c} & & E \ar@{->>}[ld]^{\sigma} \ar[rd]_{\rho} 
& & B_1 \ar@<-0,5ex>[ld]_{d} \ar@<0,5ex>[ld]^{c} \\
& A_0 & & B_0}$$
where
\begin{enumerate}
\item[-] $\sigma$ is a regular epimorphism, and $R[{\sigma}]$ is its kernel pair;
\item[-] $\rho$ coequalizes $d, c \colon R \rightrightarrows E$;
\item[-] $(\overline{\sigma},\sigma)$ and $(\overline{\rho},\rho)$
are discrete fibrations. 
\end{enumerate}
Given two fractors $E \colon \bA \looparrowright \bB$ and 
$E' \colon \bA \looparrowright \bB$, a 2-cell is just an arrow $E \to E'$ in $\cA$
satisfying suitable compatibility conditions.\\
The main result in~\cite{MMV} states that the bicategory of fractions of $\Grpd(\cA)$ with respect to weak equivalences is the embedding
$$\cF \colon \Grpd(\cA) \to \Fract(\cA)$$
of functors into fractors. Therefore, using Proposition~\ref{PropBPBFract1}, we have the following result.
}\end{Text}

\begin{Proposition}\label{PropBipbFract}
\hfill
\begin{enumerate}
\item The bicategory of fractors \/ $\Fract(\cA)$ has bipullbacks and moreover the 2-functor $\cF \colon \Grpd(\cA) \to \Fract(\cA)$ 
preserves bipullbacks.
\item In particular, $\Fract(\cA)$ has bikernels and $\cF \colon \Grpd(\cA) \to \Fract(\cA)$ preserves bikernels.
\end{enumerate}
\end{Proposition}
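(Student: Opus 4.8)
The plan is to deduce both statements from Proposition~\ref{PropBPBFract1} together with the structural facts about internal groupoids recalled in~\ref{TextIntGrpd} and~\ref{TextFract}, so there is essentially no new work beyond bookkeeping.

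First I would apply Proposition~\ref{PropBPBFract1} with $\mathbf B = \Grpd(\cA)$ and $\Sigma$ the class of weak equivalences. Its hypotheses hold: by~\ref{TextIntGrpd} the 2-category $\Grpd(\cA)$ has bipullbacks (it even has strong h-pullbacks), and by~\ref{TextFract} the class $\Sigma$ has a right calculus of fractions in the bicategorical sense. The proposition then gives that $\Grpd(\cA)[\Sigma^{-1}]$ has bipullbacks and that $P_{\Sigma} \colon \Grpd(\cA) \to \Grpd(\cA)[\Sigma^{-1}]$ preserves them.

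Next I would transport this along the main result of~\cite{MMV} recalled in~\ref{TextFract}: the embedding $\cF \colon \Grpd(\cA) \to \Fract(\cA)$ of functors into fractors exhibits $\Fract(\cA)$ as the bicategory of fractions of $\Grpd(\cA)$ at $\Sigma$, so there is a biequivalence $\Grpd(\cA)[\Sigma^{-1}] \simeq \Fract(\cA)$ compatible with $P_{\Sigma}$ and $\cF$. Since a bipullback is characterised purely by the bicategorical universal property BP1--BP2 of~\ref{TextBipb}, it is preserved and reflected by biequivalences; transporting the conclusion of the previous step therefore yields that $\Fract(\cA)$ has bipullbacks and that $\cF$ preserves them, which is item~(1).

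For item~(2), recall from~\ref{TextIntGrpd} that the bikernel of $F$ is by definition the bipullback of $F$ along the essentially unique morphism out of the zero groupoid $0$. As $\Fract(\cA)$ has bipullbacks by~(1), it has bikernels; and since $\cF$ is the identity on objects (so it fixes $0$) and sends the essentially unique functor $0 \to \bB$ to the essentially unique fractor $0 \looparrowright \bB$, applying the preservation statement of~(1) to the bipullback square defining $\bK(F)$ shows that $\cF(\bK(F))$ is the bikernel of $\cF(F)$ in $\Fract(\cA)$. The only point needing care is the transport step: bipullbacks are a pseudo-limit notion (objects determined only up to equivalence, cf.~\ref{TextBipbHpb}), so one must carry along the comparison pseudonatural equivalence relating $P_{\Sigma}$ and $\cF$ when moving the universal property across $\Grpd(\cA)[\Sigma^{-1}] \simeq \Fract(\cA)$; once this coherence is spelled out, both claims are formal consequences of Proposition~\ref{PropBPBFract1}.
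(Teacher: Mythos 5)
Your proposal is correct and follows essentially the same route as the paper, which derives Proposition~\ref{PropBipbFract} directly from Proposition~\ref{PropBPBFract1} applied to $\Grpd(\cA)$ with $\Sigma$ the weak equivalences, using the identification $\Grpd(\cA)[\Sigma^{-1}] \simeq \Fract(\cA)$ from~\cite{MMV} and the fact that bikernels are bipullbacks along the map from the zero groupoid. Your explicit remark that the biequivalence-transport step must be carried out coherently is a point the paper leaves implicit, but it does not change the argument.
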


From Lemma~4.5 in~\cite{SnailMMVShort}, we also know that the 2-functors $\pi_0$ and $\pi_1$ convert weak
equivalences in isomorphisms, so that thay can be extended to the bicategory of fractions.
$$\xymatrix{\Grpd(\cA) \ar[r]^-{\cF} \ar[rd]_{\pi_0} & \Fract(\cA) \ar[d]^{\pi_0} \\
\ar@{}[ru]|>>>>{\cong} & \cA} \;\;\;\;\;\;\;\;
\xymatrix{\Grpd(\cA) \ar[r]^-{\cF} \ar[rd]_{\pi_1} & \Fract(\cA) \ar[d]^{\pi_1} \\
\ar@{}[ru]|>>>>{\cong} & \Grp(\cA)}$$
Now we are ready to extend the exact sequence of~\ref{TextIntGrpd} to fractors.

\begin{Theorem}\label{ThFractSnail}
Let $F \colon \bA \looparrowright \bB$ be a fractor between groupoids in $\cA$, together with its bikernel 
$K(F) \colon \bK(F) \to \bA$. There exists an exact sequence
$$\xymatrix{\pi_1(\bK(F)) \ar[rr]^-{\pi_1(K(F))} & &  \pi_1(\bA) \ar[r]^{\pi_1(F)} & \pi_1(\bB) \ar[r]^-{ } 
& \pi_0(\bK(F)) \ar[rr]^-{\pi_0(K(F))} & & \pi_0(\bA) \ar[r]^{\pi_0(F)} & \pi_0(\bB).}$$
\end{Theorem}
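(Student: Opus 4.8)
The plan is to reduce the statement to the case of an ordinary internal functor, already established in~\cite{SnailMMVShort}, by exploiting the fact that $\Fract(\cA)$ is a bicategory of fractions and that $\cF$ preserves bipullbacks. Write $\Sigma$ for the class of weak equivalences in $\Grpd(\cA)$. By~\cite{MMV}, the embedding $\cF \colon \Grpd(\cA) \to \Fract(\cA)$ exhibits $\Fract(\cA)$ as $\Grpd(\cA)[\Sigma^{-1}]$, so, using the description of fractions recalled in~\ref{TextBicatFractDescr}, the fractor $F \colon \bA \looparrowright \bB$ is represented by a span $\bA \xleftarrow{W} \bF \xrightarrow{R} \bB$ with $W$ a weak equivalence and $R$ an internal functor; equivalently, there is a 2-iso $F \cong \cF(R) \cdot \cF(W)^{-1}$ in $\Fract(\cA)$, where $\cF(W)^{-1} \colon \bA \to \bF$ is the quasi-inverse $(W,1_{\bF})$ of the equivalence $\cF(W)$.

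First I would compute the bikernel of $F$. By definition $\bK(F)$ is the bipullback of $F$ with the arrow $0 \to \bB$ out of the zero object, the latter being $\cF$ of the internal functor $0 \to \bB$. Since $\cF(W)$ is an equivalence and bipullbacks compose (\ref{TextBipbHpb}), the bipullback of $\cF(R) \cdot \cF(W)^{-1}$ and $0 \to \bB$ is equivalent to the bipullback of $\cF(R)$ and $0 \to \bB$; and the latter is $\cF$ of the bikernel of $R$ in $\Grpd(\cA)$, because $\cF$ preserves bikernels (Proposition~\ref{PropBipbFract}). Tracing the legs through this pasting of bipullbacks, exactly as in the proof of Proposition~\ref{PropBPBFract1}, one obtains an equivalence $\bK(F) \simeq \cF(\bK(R))$ under which the projection $K(F) \colon \bK(F) \to \bA$ corresponds to $\cF(K(R))$ post-composed with $\cF(W)$.

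Then I would apply $\pi_0$ and $\pi_1$. The extended 2-functors satisfy $\pi_i \circ \cF \cong \pi_i$, and by Lemma~4.5 of~\cite{SnailMMVShort} they send weak equivalences to isomorphisms, so $\pi_i(W)$ is an isomorphism $\pi_i(\bF) \cong \pi_i(\bA)$; moreover $\pi_i(\bK(F)) \cong \pi_i(\bK(R))$ by the previous paragraph. As $\cA$ and $\Grp(\cA)$ are discrete 2-categories, $\pi_i$ sends 2-isomorphic 1-cells to equal arrows, so under these isomorphisms $\pi_i(F)$ becomes $\pi_i(R)$ and $\pi_i(K(F))$ becomes $\pi_i(K(R))$. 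Hence applying $\pi_0,\pi_1$ and transporting turns the exact sequence of~\ref{TextIntGrpd} associated with the internal functor $R \colon \bF \to \bB$,
$$\pi_1(\bK(R)) \to \pi_1(\bF) \to \pi_1(\bB) \to \pi_0(\bK(R)) \to \pi_0(\bF) \to \pi_0(\bB),$$
into the sequence of the statement, the unlabelled connecting arrow being the transport of the one of~\cite{SnailMMVShort}. This transported sequence is exact because exactness in the sense of~\ref{TextIntGrpd}, namely factoring a map as a regular epimorphism followed by a kernel, is invariant under isomorphisms of the objects and maps involved.

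The step I expect to be the main obstacle is the compatibility bookkeeping implicit in the second and third paragraphs: checking that the equivalence $\bK(F) \simeq \cF(\bK(R))$ produced by the pasting of bipullbacks really carries the bipullback projection $K(F)$ to $\cF(K(R))$, and that the pseudonatural isomorphisms $\pi_i \circ \cF \cong \pi_i$ are compatible with all of the identifications made, so that the transported arrows are genuinely $\pi_0(K(F)),\pi_1(K(F)),\pi_0(F),\pi_1(F)$ and not merely abstractly isomorphic copies of the corresponding maps of~\cite{SnailMMVShort}.
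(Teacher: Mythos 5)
Your proposal is correct and follows essentially the same route as the paper: represent the fractor by a span $\bA \xleftarrow{\;S\;} \bF \xrightarrow{\;R\;} \bB$ with $S$ a weak equivalence, use Proposition~\ref{PropBipbFract} to identify $\bK(F)$ with $\cF(\bK(R))$ (with $K(F)$ corresponding to $\cF(K(R))\cdot\cF(S)$), and transport the exact sequence of~\ref{TextIntGrpd} for $R$ along the isomorphisms induced by $\pi_0$ and $\pi_1$, which invert weak equivalences. The bookkeeping you flag as the main obstacle is exactly what the paper handles via the comparison equivalence $S'$ and the final commutative ladder.
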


\begin{proof}
Since the class of weak equivalences in $\Grpd(\cA)$ has a right calculus of fractions (in the bicategorical sense)~\cite{FractEV}, by construction of 
$\Grpd(\cA)[\Sigma^{-1}] \simeq \Fract(\cA)$,
the fractor $F \colon \bA \looparrowright \bB$ has a tabulation, i.e., there exist two internal functors 
$$\xymatrix{\bA & \ar[l]_-{S} \bF \ar[r]^-{R} & \bB}$$ 
with $S$ a weak equivalence and such that
$\cF(S) \cdot F \cong \cF(R)$. We also consider the bikernel
$$\xymatrix{\bK(R) \ar[r]^-{K(R)} & \bF \ar[r]^-{R} & \bB}$$
of $R$ in $\Grpd(\cA)$. By Proposition~\ref{PropBipbFract},
$$\xymatrix{\bK(R) \ar[rr]^-{\cF(K(R))} & & \bF \ar[r]^-{\cF(R)} & \bB}$$
is a bikernel in $\Fract(\cA)$. Therefore, since $\cF(S)$ is an equivalence, also
$$\xymatrix{\bK(R) \ar[rr]^-{\cF(K(R)) \cdot \cF(S)} & & \bA \ar[r]^-{F} & \bB}$$
is a bikernel in $\Fract(\cA)$. In other words, the comparison $S'$ in the following diagram is an equivalence.
$$\xymatrix{\bK(R) \ar[rr]^-{\cF(K(R))} \ar[d]_{S'} & & \bF \ar[rr]^-{\cF(R)} \ar[d]_{\cF(S)} & & \bB \ar[d]^{1_{\bB}} \\
\bK(F) \ar[rr]_-{K(F)} \ar@{}[rru]|{\cong} & & \bA \ar[rr]_-{F} & \ar@{}[u]|{\cong} & \bB}$$
Now we can construct an exact sequence starting from the functor $R \colon \bF \to \bB$ as in~\ref{TextIntGrpd}, 
and we get the first line of the following diagram.
$$\resizebox{\textwidth}{!}{\xymatrix{\pi_1(\bK(R)) \ar[rr]^-{\pi_1(K(R))} \ar[d]_{\cong} & & \pi_1(\bF) \ar[rr]^-{\pi_1(R)} \ar[d]_{\cong} & & \pi_1(\bB) \ar[r] \ar[d]_{\cong} 
& \pi_0(\bK(R)) \ar[rr]^-{\pi_0(K(R))} \ar[d]^{\cong} & & \pi_0(\bF) \ar[rr]^-{\pi_0(R)} \ar[d]^{\cong} & & \pi_0(\bB) \ar[d]^{\cong} \\
\pi_1(\bK(R)) \ar[rr]^-{\pi_1(\cF(K(R)))} \ar[d]_{\pi_1(S')} & & \pi_1(\bF) \ar[rr]^-{\pi_1(\cF(R))} \ar[d]_{\pi_1(\cF(S))} & & \pi_1(\bB) \ar[d]_{\id} & 
\pi_0(\bK(R)) \ar[rr]^-{\pi_0(\cF(K(R)))} \ar[d]^{\pi_0(S')} & & \pi_0(\bF) \ar[rr]^-{\pi_0(\cF(R))} \ar[d]^{\pi_0(\cF(S))} & & \pi_0(\bB) \ar[d]^{\id} \\
\pi_1(\bK(F)) \ar[rr]_-{\pi_1(K(F))} & & \pi_1(\bA) \ar[rr]_-{\pi_1(F)} & & \pi_1(\bB) & 
\pi_0(\bK(F)) \ar[rr]_-{\pi_0(K(F))} & & \pi_0(\bA) \ar[rr]_-{\pi_0(F)} & & \pi_0(\bB)}}$$
Since $S'$ and $\cF(S)$ are equivalences, all the columns are isomorphisms and the proof is complete.
\end{proof}

\begin{Text}\label{TextProper}{\rm
In~\cite{SnailMMVShort}, the $\pi_0$-$\pi_1$-exact sequence associated with an internal functor $F \colon \bA \to \bB$ is obtained under the 
assumption that the groupoids $\bA, \bB$ and $\bK(F)$ are proper. Here we can omit this assumption, because our base category $\cA$ is 
exact, so that all groupoids are proper.
}\end{Text}

\end{document}